\documentclass[12pt]{article}

\textheight=24cm
\voffset=-3cm

\usepackage[utf8]{inputenc}
\usepackage[british]{babel}
\usepackage{amsmath,amssymb,amsthm}
\usepackage{color}
\usepackage{graphicx}
\usepackage{float}

\def\bbN{{\mathbb{N}}}
\def\bbR{{\mathbb{R}}}
\def\calB{{\mathcal{B}}}
\def\calC{{\mathcal{C}}}
\def\calM{{\mathcal{M}}}
\def\calO{{\mathcal{O}}}
\def\rmd{{\mathrm{d}}}
\def\dt{{\Delta t}}

\def\sn{\operatorname{sn}}

\newtheorem{Rule}{Rule}
\newtheorem*{Rule2bis}{Rule 2'}
\newtheorem*{Rule3bis}{Rule 3'}
\newtheorem{Definition}{Definition}
\newtheorem{Remarque}{Remark}
\newtheorem{Proposition}{Proposition}

\title{Extending nonstandard finite difference schemes rules to systems of nonlinear ODEs with constant coefficients}
\author{Marc E. Songolo\textsuperscript{a}$^{\ast}$\thanks{$^\ast$Corresponding author. Email: marc.songolo@gmail.com} and 
Brigitte Bid\'egaray-Fesquet\textsuperscript{b} \\
\small\textsuperscript{a}Department of Mathematics and Computer Science, \\
\small University of Lubumbashi, 1825 Lubumbashi, Republic Democratic of the Congo; \\
\small ICAM School of Engineering-Nantes campus, 44470 Carquefou, France; \\
\small IETR, Polytech Nantes, University of Nantes, 44306 Nantes, France; \\
\small \textsuperscript{b}Univ. Grenoble Alpes, Grenoble INP$^{\dag}$\footnote{$^{\dag}$Institute of Engineering Univ. Grenoble Alpes}, LJK, 38000 Grenoble, France}
\date{}

\begin{document}
\maketitle

{\sc Abstract.}
In this paper, we present a reformulation of Mickens' rules for nonstandard finite difference (NSFD) scheme to adapt them to systems of ODEs. This leads to exact schemes in the linear case, and also improve the accuracy in the nonlinear case. In the Hamiltonian nonlinear case, it consists in adding correction terms to schemes derived by Mickens. \\

{\sc Keywords.} 
Ordinary Differential Equations, Matrix exponential, Exact finite difference schemes, Nonstandard finite difference schemes.

\section{Introduction}

The nonstandard finite difference (NSFD) method was created to overcome the defects of numerical instabilities presented by traditional methods such as Euler, Runge--Kutta, etc. The numerical instabilities indicate that the discrete equations are not able to model correctly the qualitative properties of the solutions to the differential equations \cite{Mic94}. The rules for constructing NSFD models were derived from the construction of exact schemes for certain equations. These rules were then applied to many types of ordinary differential equations (ODE) and to partial derivatives in order to obtain stable schemes which preserve the qualitative properties of the equations. The reference \cite{Pat16} reviews many recent developments and applications of NSFD schemes.

In this paper, we propose to revisit some of Mickens' rules in the light of recent works \cite{Cie13, CR10, CR11, QT18, SBF18, SBF21}. Especially the second rule, that relates to the renormalization of the discretization step-size, does not \textit{a priori} take into account coupling factors between the equations in a differential system. We intend here to extend Mickens' rules to the case of systems of ODEs where we split a linear and a nonlinear part $X'=AX(t)+B(X(t))$.

The paper is organized as follows: we present Mickens' basic rules in Section 2. Section 3 displays the construction of NSFD schemes for systems of ODEs, from a matrix formulation to a derivation of scalar forms. This leads to define two correctors, the effect of which we explore on two examples in Section 4. The discussion then sets out the possible difficulties in developing this strategy and the way to overcome them. 

\section{The Nonstandard Finite Difference Context}

\subsection{Nonstandard Finite Difference Rules}
\label{sec:rules}

In this section, we first give the rules for the construction of NSFD schemes as proposed by Mickens \cite{Mic94}.

\begin{Rule}
\label{rule:1}
The order of the discrete derivatives must be exactly equal to the order of the corresponding derivatives of the differential equations.
\end{Rule}

\begin{Rule}
\label{rule:2}
Denominator functions for the discrete derivatives must, in general, be expressed in terms of more complicated functions of the step-sizes than those conventionally used. 
\end{Rule}

\begin{Rule}
\label{rule:3}
Nonlinear terms must, in general, be modeled non-locally on the computational grid or lattice.
\end{Rule}

\begin{Rule}
\label{rule:4}
Special solutions of the differential equations should also be special (discrete) solutions of the finite difference models.
\end{Rule}

\begin{Rule}
\label{rule:5}
The finite difference equations should not have solutions that do not correspond exactly to solutions of the differential equations.
\end{Rule}

These rules initially apply to single differential equations. Already in \cite{Mic94} the case of Hamiltonian equations treated as systems of two first order equations makes use of a slightly modified version of Rule \ref{rule:2}. Indeed, the derivatives are approximated by
\begin{equation}
\label{eq:rule2}
\frac{\rmd x}{\rmd t} \simeq \frac{x_{k+1}-\psi(\dt)x_{k}}{\phi(\dt)},
\end{equation}
where $\psi$ and $\phi$ are the functions of the step-size $\dt$ and the parameters of the equations. As suggested by Rule \ref{rule:2}, the denominator $\phi(\dt)$ plays the role of the step size and is such that 
\begin{equation*}
\phi(\dt) = \dt + \calO(\dt^2) \text{ as } \dt\to0.
\end{equation*} 
There is an additional function $\psi$, which is not mentioned in Rule \ref{rule:2} and is close to identity, namely
\begin{equation*}
\psi(\dt) = 1 + \calO(\dt^2) \text{ as } \dt\to0.
\end{equation*}

\subsection{Nonstandard, Exact, and Best Finite Difference Schemes}

\begin{Definition}[Nonstandard finite difference scheme, \cite{Mic00}]
A nonstandard finite difference scheme is any discrete representation of a system of differential equations that is constructed based on the above rules.
\end{Definition}

Originally, in \cite{Mic94}, this was the definition of a \textit{best finite difference scheme}. Indeed these rules have been defined to have \textit{exact finite difference schemes} thus avoiding the usual questions about consistency, stability and convergence. This more or less involves that we know exact solutions of the equations (see Rules \ref{rule:4} and \ref{rule:5}), which is of course not the case in general. It is however expected that schemes constructed with these rules would lead to the "best" schemes.

\section{Nonstandard Finite Difference Models}

We address systems of ordinary differential equations where we separate a linear and a nonlinear part:
\begin{equation}
\label{eq:system}
X'=AX(t)+B(X(t)),
\end{equation}
where $t\in[0,T]$, $X(t)\in\bbR^n$, $A\in\calM_{n\times n}(\bbR)$, and $B\in\calC^{0}(\bbR^n,\bbR^n)$.

The analytical solution to system \eqref{eq:system} can be expressed in integral form by
\begin{equation}
\label{eq:integralform}
X(t+\dt) = e^{\dt A} X(t) + \int_t^{t+\dt} e^{(t+\dt-s)A}B(X(s)) \rmd s. 
\end{equation}
To go further in the explicit computations, we approximate $B(X(s))$ on the time interval $[t,t+\dt]$ by a function of $X(t)$ and $X(t+\dt)$:
\begin{equation*}
B(X(s)) \simeq \calB(X(t),X(t+\dt)).
\end{equation*} 
Inserting this in \eqref{eq:integralform}
\begin{align*}
X(t+\dt) & \simeq e^{\dt A} X(t) + \int_t^{t+\dt} e^{(t+\dt-s)A}ds\ \calB(X(t),X(t+\dt))\\
& = e^{\dt A} X(t) + (e^{\dt A}-I)A^{-1} \calB(X(t),X(t+\dt)),
\end{align*}
where $I$ is the identity matrix in $\calM_{n\times n}(\bbR)$.
The simplest numerical method obtained by this formula is the \textit{exponential Euler approximation} for which $\calB(X(t),X(t+\dt))=B(X(t))$:
\begin{equation*}
X_{k+1} =  e^{\dt A} X_k + (e^{\dt A}-I)A^{-1}B(X_k),
\end{equation*}
where $X_k$ is approximating $X(t_k)$, $t_k=k\dt$ for $k\in\bbN$.
This method makes use of a matrix exponential and is hence called \textit{exponential integrator} \cite{HO10}. \\
Such an approximation for the nonlinear part does however not fulfill Rule \ref{rule:3} which advocates for a nonlocal discretization of the nonlinear part. We will therefore prefer the more general form
\begin{equation}
\label{eq:expintegrator}
X_{k+1} =  e^{\dt A} X_k + (e^{\dt A}-I)A^{-1}\calB(X_k,X_{k+1}).
\end{equation}

\subsection{Matrix formulation}

In view of \eqref{eq:expintegrator}, we define the renormalisation matrix $\Phi(\dt) = (e^{\dt A}-I)A^{-1}$, and we can replace the exponential $e^{\dt A}$ by  $I+\Phi(\dt)A$ in \eqref{eq:expintegrator} to obtain
\begin{equation*}
X_{k+1}=\left(I+\Phi(\dt)A\right)X_k+\Phi(\dt)\calB(X_k,X_{k+1}),
\end{equation*}
or equivalently
\begin{equation}
\label{eq:scheme_matrix}
\Phi^{-1}(\dt)\left(X_{k+1}-X_k\right)= AX_k+\calB(X_k,X_{k+1}),
\end{equation}
where the renormalization matrix verifies the property
\begin{equation*}
\Phi(\dt) = \dt I + \calO(\dt^2) \text{ as } \dt\to0.
\end{equation*}

Let us generalize \eqref{eq:rule2} and hence Rule \ref{rule:2} for systems of ordinary differential equations. The scalar functions $\phi$ and $\psi$ are then replaced by matrix-valued functions $\Phi$ and $\Psi$.

\begin{Rule2bis}
The first order derivatives in a nonstandard scheme for a system of ordinary differential equations should be approximated as
\begin{equation*}
\frac{\rmd X}{\rmd t} \simeq \Phi(\dt)^{-1}(X_{k+1}-\Psi(\dt)X_{k}),
\end{equation*}
where
\begin{equation*}
\Phi(\dt) = \dt I + \calO(\dt^2) \text{ as } \dt\to0,
\end{equation*}
and
\begin{equation*}
\Psi(\dt) = I + \calO(\dt^2) \text{ as } \dt\to0.
\end{equation*}
\end{Rule2bis}

Scheme \eqref{eq:scheme_matrix} is nonstandard. In particular, the discretization of the first order derivative corresponds to the above generalized rule, with $\Psi\equiv I$. The fact that we are able to write an exact or only a best scheme depends on the nonlinearity. 

The major drawback of such a scheme is that we have to evaluate the exponential of matrix $\dt A$. This can prove to be an expensive computation \cite{MV03}. This is the reason why we propose in the next section to reformulate scheme \eqref{eq:scheme_matrix} in a scalar way.

\subsection{Scalar formulation}
\label{sec:scalar}

\subsubsection{Construction}

To reformulate scheme \eqref{eq:scheme_matrix}, we consider the Cayley--Hamilton theorem, which implies that the exponential matrix can be rewritten as a finite expansion in powers of $A$: 
\begin{equation}
\label{eq:Expmatrice}
e^{\dt A} = \alpha_0(\dt)I + \alpha_1(\dt)A + \alpha_2(\dt)A^2 + \cdots 
+\alpha_{n-1}(\dt)A^{n-1},
\end{equation}
where $\alpha_0(\dt), \alpha_1(\dt), \dots, \alpha_{n-1}(\dt) \in \bbR$. The construction of these coefficients in the general case can be found in \cite{MV03}.
 
Introducing expansion \eqref{eq:Expmatrice} in the exponential integration scheme \eqref{eq:expintegrator} yields
\begin{align*}
X_{k+1} =\ 
& \alpha_0(\dt) X_k + \alpha_1(\dt) [AX_k + \calB(X_k,X_{k+1})] \\
& + \sum_{j=2}^{n-1} \alpha_j(\dt) A^{j-1} [AX_k + \calB(X_k,X_{k+1})] \\
& + (\alpha_0(\dt)-1)A^{-1}\calB(X_k,X_{k+1}),
\end{align*}
which also reads
\begin{align*}
\frac{X_{k+1}-\alpha_0(\dt)X_k}{\alpha_1(\dt)} =\ 
& [I+R_1(\dt,A)] [AX_k + \calB(X_k,X_{k+1})] \\
& + R_0(\dt,A)\calB(X_k,X_{k+1}),
\end{align*}
where we define the two correction factors
\begin{equation}
\label{eq:R0andR1}
R_0(\dt,A) = \frac{\alpha_0(\dt)-1}{\alpha_1(\dt)}A^{-1}, 
\hspace{5mm}
R_1(\dt,A) = \sum_{j=2}^{n-1}\frac{\alpha_j(\dt)}{\alpha_1(\dt)}A^{j-1}.
\end{equation}
In addition, we also introduce the notion of correction vectors,
\begin{align*}
T_0(\dt,A,X_k,X_{k+1}) & = R_0(\dt,A)\calB(X_k,X_{k+1}), \\
T_1(\dt,A,X_k,X_{k+1}) & = R_1(\dt,A)\left[AX_k + \calB(X_k,X_{k+1})\right], 
\end{align*}
to write the NSFD scheme as
\begin{equation}
\label{eq:scheme_scalar}
\begin{aligned}
\frac{X_{k+1} - \alpha_0(\dt) X_k}{\alpha_1(\dt)} =\  
& AX_k + \calB(X_k,X_{k+1}) \\
& + T_0(\dt,A,X_k,X_{k+1}) + T_1(\dt,A,X_k,X_{k+1}).
\end{aligned}
\end{equation}
With regard to Mickens' second rule, we identify $\psi(\dt)=\alpha_0(\dt)$ and $\phi(\dt)=\alpha_1(\dt)$. 

\begin{Remarque}
\label{rk:1}
If the system dimension is $n=2$, $R_1(\dt,A) = 0$. In the case of a single equation ($n=1$), the above formulation is not valid since $\alpha_1\equiv0$. \\
For linear systems, the correction $T_0(\dt,A,X_k,X_{k+1})$ vanishes. 
\end{Remarque}

\subsubsection{Order estimate}

\begin{Proposition}
\label{prop:alpha}
The coefficients $\alpha_j(\dt)$ occurring in \eqref{eq:Expmatrice} verify 
\begin{equation*}
\alpha_j(\dt) = \frac{\dt^j}{j!} + \calO(\dt^n).
\end{equation*}
\end{Proposition}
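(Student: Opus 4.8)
The plan is to combine the characterisation of the $\alpha_j(\dt)$ as polynomial-division remainders with the power-series expansion of the exponential. Let $p(\lambda)=\det(\lambda I-A)$ be the characteristic polynomial of $A$, a monic polynomial of degree $n$. Dividing the entire function $\lambda\mapsto e^{\dt\lambda}$ by $p$ produces a unique remainder of degree at most $n-1$,
\begin{equation*}
e^{\dt\lambda} = q(\lambda,\dt)\,p(\lambda) + \sum_{j=0}^{n-1}\alpha_j(\dt)\lambda^j ,
\end{equation*}
and, since $p(A)=0$ by the Cayley--Hamilton theorem, evaluating at $\lambda=A$ recovers exactly \eqref{eq:Expmatrice}. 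This is the construction referred to in \cite{MV03}; writing the remainder through the Hermite interpolation data at the eigenvalues shows in particular that each $\alpha_j$ is an analytic (indeed entire) function of $\dt$, so the statement concerns the first $n$ coefficients of its Taylor expansion at $\dt=0$.

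First I would expand $e^{\dt A}=\sum_{m\ge0}\frac{\dt^m}{m!}A^m$ and reduce each power modulo $p$. For $0\le m\le n-1$ the monomial $A^m$ is already reduced, so these terms contribute precisely $\frac{\dt^j}{j!}$ to $\alpha_j(\dt)$. For $m\ge n$, repeated use of Cayley--Hamilton gives $A^m=\sum_{j=0}^{n-1}b^{(m)}_j A^j$, where the scalars $b^{(m)}_j$ depend only on the coefficients of $p$; equivalently, $b^{(m)}_j=\delta_{mj}$ for $0\le m\le n-1$ and, for each fixed $j$, the sequence $(b^{(m)}_j)_{m\ge0}$ satisfies the linear recurrence whose characteristic polynomial is $p$. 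Collecting terms,
\begin{equation*}
\alpha_j(\dt) = \frac{\dt^j}{j!} + \sum_{m=n}^{\infty}\frac{\dt^m}{m!}\,b^{(m)}_j ,
\qquad j=0,\dots,n-1 ,
\end{equation*}
so it only remains to show that the correction series is $\calO(\dt^n)$ as $\dt\to0$.

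To control the tail I would bound the reduction coefficients: since each $(b^{(m)}_j)_m$ solves a fixed linear recurrence of order $n$, there exist constants $C>0$ and $\rho>0$, depending only on $A$, with $|b^{(m)}_j|\le C\rho^m$ for all $m$ and $j$. Hence
\begin{equation*}
\left|\sum_{m=n}^{\infty}\frac{\dt^m}{m!}\,b^{(m)}_j\right|
\le C\sum_{m=n}^{\infty}\frac{(\rho|\dt|)^m}{m!}
\le C\,\frac{(\rho|\dt|)^n}{n!}\,e^{\rho|\dt|} = \calO(\dt^n) ,
\end{equation*}
which proves the proposition. The main obstacle is exactly this last point: one must ensure the correction term is genuinely of order $\dt^n$ and not merely a formal series beginning at that order, which is why the growth bound on the $b^{(m)}_j$ (equivalently, the analyticity of the $\alpha_j$) is needed — everything else is bookkeeping with the exponential series. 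An essentially equivalent shortcut is to invoke the analyticity of $\alpha_j(\dt)$ directly from the division/interpolation formula and then simply read off its Taylor coefficients of order $<n$ from the reduction above, the $\calO(\dt^n)$ remainder being automatic for analytic functions.
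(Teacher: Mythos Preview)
Your proof is correct and follows essentially the same route as the paper's: both expand $e^{\dt A}$ in powers of $A$, reduce each $A^m$ modulo the characteristic polynomial via Cayley--Hamilton (your $b^{(m)}_j$ are the paper's $\beta_{mj}$), and read off $\alpha_j(\dt)=\frac{\dt^j}{j!}+\sum_{m\ge n}\frac{\dt^m}{m!}\,b^{(m)}_j$. The only difference is in how the tail is controlled --- you bound the $b^{(m)}_j$ by the exponential growth estimate coming from the linear recurrence with characteristic polynomial $p$, whereas the paper leans on the matrix-norm bound $\|e^{\dt A}-S_{n-1}(\dt A)\|\le C\dt^n$; your version is the more direct of the two for extracting the scalar $\calO(\dt^n)$ conclusion.
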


\begin{proof}
Let $S_{n-1}(\dt A)$ be the truncated expansion of $\exp(\dt A)$ in terms of powers of $\dt A$:
\begin{equation*}
S_{n-1}(\dt A) = \sum_{j=0}^{n-1} \frac{\dt^j}{j!} A^j.
\end{equation*}
Then 
\begin{align*}
\exp(\dt A) - S_{n-1}(\dt A) 
& = \sum_{k=n}^{+\infty} \frac{\dt^k}{k!} A^k
= \dt^n A^n \sum_{k=0}^{+\infty} \frac{\dt^k}{(n+k)!} A^k, \\
\|\exp(\dt A) - S_{n-1}(\dt A) \| 
& \leq \dt^n \|A\|^n \sum_{k=0}^{+\infty} \frac{\dt^k}{k!} \|A\|^k
= \dt^n \|A\|^n \exp(\dt\|A\|).
\end{align*}
For $\dt\in]0,\dt_0]$, setting $C=\|A\|^n \exp(\dt_0\|A\|)$, 
\begin{equation*}
\|\exp(\dt A) - S_{n-1}(\dt A)\| \leq C \dt^n.
\end{equation*}
The construction in \cite{MV03} is based on the Cayley--Hamilton theorem. Matrix $A^n$ can be written as a finite expansion in lower powers of $A$, defining coefficients $c_j$, $j=0,\dots,n-1$:
\begin{equation*}
A^n = \sum_{j=0}^{n-1} c_j A^j.
\end{equation*}
This allows to define coefficients $\beta_{kj}$, $k\geq0$, $j=0,\dots,n-1$, such that 
\begin{equation*}
A^k = \sum_{j=0}^{n-1} \beta_{kj} A^j,
\end{equation*}
and the $\beta_{kj}$ can be computed iteratively
\begin{equation*}
\beta_{kj} = \begin{cases}
\delta_{kj} & k<n, \\
c_j & k=n, \\
c_0\beta_{k-1,n-1} & k>n, j=0, \\
c_j\beta_{k-1,n-1} + \beta_{k-1,j-1} & k>n, j>0.
\end{cases}
\end{equation*}
Plugging this in the expansion of $\exp(\dt A)$ in terms of powers of $\dt A$, we obtain coefficients $\alpha_j$:
\begin{equation*}
\alpha_j(\dt) 
= \sum_{k=0}^{n-1} \frac{\dt^k}{k!} \beta_{kj} 
+ \frac{\dt^n}{n!} \beta_{nj} 
+ \sum_{k=n+1}^\infty \frac{\dt^k}{k!} \beta_{kj} 
= \frac{\dt^j}{j!} + \frac{\dt^n}{n!} c_j 
+ \sum_{k=n+1}^\infty \frac{\dt^k}{k!} \beta_{kj}. 
\end{equation*}
This implies that at the precision $\calO(\dt^n)$ the expansion \eqref{eq:Expmatrice} is exactly $S_{n-1}(\dt A)$.
\end{proof}

\begin{Proposition}
The correction factors $R_0$ and $R_1$ have the following series expansion
\label{prop:R}
\begin{align*}
R_0(\dt,A) & = \frac{\dt^{n-1}}{n!}(-1)^{n-1}\det(A) A^{-1} + \calO(\dt^n), \\
R_1(\dt,A) & = \sum_{j=2}^{n-1}\frac{\dt^{j-1}}{j!}A^{j-1} + \calO(\dt^{n-1}).
\end{align*}
\end{Proposition}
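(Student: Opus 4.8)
The plan is to reduce both expansions to three ingredients that are already at hand: the sharp asymptotics of the scalar coefficients $\alpha_j(\dt)$ extracted from the proof of Proposition~\ref{prop:alpha}, the identification of the Cayley--Hamilton constant $c_0$ with a multiple of $\det(A)$, and a geometric-series expansion of $1/\alpha_1(\dt)$. From the proof of Proposition~\ref{prop:alpha} I would keep not merely $\alpha_j(\dt)=\dt^j/j!+\calO(\dt^n)$ but the refined form
\begin{equation*}
\alpha_j(\dt)=\frac{\dt^j}{j!}+\frac{\dt^n}{n!}c_j+\calO(\dt^{n+1}),\qquad 0\le j\le n-1,
\end{equation*}
where the $c_j$ are the coefficients of the Cayley--Hamilton relation $A^n=\sum_{j=0}^{n-1}c_jA^j$. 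In particular $\alpha_1(\dt)=\dt+\calO(\dt^n)=\dt\,\bigl(1+\calO(\dt^{n-1})\bigr)$, whence
\begin{equation*}
\frac{1}{\alpha_1(\dt)}=\frac{1}{\dt}\bigl(1+\calO(\dt^{n-1})\bigr).
\end{equation*}

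For $R_1$, I would just multiply out: for each $j$ with $2\le j\le n-1$,
\begin{equation*}
\frac{\alpha_j(\dt)}{\alpha_1(\dt)}
=\frac{1}{\dt}\bigl(1+\calO(\dt^{n-1})\bigr)\Bigl(\frac{\dt^j}{j!}+\calO(\dt^n)\Bigr)
=\frac{\dt^{j-1}}{j!}+\calO(\dt^{n-1}),
\end{equation*}
because, $j$ being at least $2$, the cross term $\calO(\dt^{n-1})\cdot\dt^j$ is $\calO(\dt^{n+1})$ and is absorbed, and likewise $\calO(\dt^{n-1})\cdot\calO(\dt^n)=\calO(\dt^{2n-1})\subseteq\calO(\dt^{n+1})$. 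Multiplying by $A^{j-1}$ and summing the (finitely many) terms over $j$ gives the stated expansion of $R_1(\dt,A)$; for $n=2$ the sum is empty and this reduces to $R_1\equiv0$, consistent with Remark~\ref{rk:1}.

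For $R_0$, I would first pin down $c_0$. The characteristic polynomial of $A$ is $\det(\lambda I-A)=\lambda^n+b_{n-1}\lambda^{n-1}+\cdots+b_0$ with $b_0=\det(-A)=(-1)^n\det(A)$; Cayley--Hamilton gives $A^n=-\sum_{j=0}^{n-1}b_jA^j$, hence $c_0=-b_0=(-1)^{n-1}\det(A)$. Using $\alpha_0(\dt)-1=\dfrac{\dt^n}{n!}c_0+\calO(\dt^{n+1})$,
\begin{equation*}
\frac{\alpha_0(\dt)-1}{\alpha_1(\dt)}
=\frac{1}{\dt}\bigl(1+\calO(\dt^{n-1})\bigr)\Bigl(\frac{\dt^n}{n!}c_0+\calO(\dt^{n+1})\Bigr)
=\frac{\dt^{n-1}}{n!}c_0+\calO(\dt^{n}),
\end{equation*}
where the cross term $\calO(\dt^{n-1})\cdot\dt^n=\calO(\dt^{2n-1})$ is $\calO(\dt^n)$ precisely because $n\ge2$. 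Multiplying by $A^{-1}$ and inserting $c_0=(-1)^{n-1}\det(A)$ yields the claimed expansion of $R_0(\dt,A)$.

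The one genuinely delicate point I expect is justifying the remainder $\calO(\dt^{n+1})$ in $\alpha_0(\dt)$ — rather than merely the $\calO(\dt^n)$ given by Proposition~\ref{prop:alpha} — that is, showing the tail $\sum_{k\ge n+1}\frac{\dt^k}{k!}\beta_{k0}$ in the proof of Proposition~\ref{prop:alpha} is of that order. This follows from the recursion for the $\beta_{kj}$: with $M=\max(1,|c_0|,\dots,|c_{n-1}|)$ one obtains $\max_j|\beta_{kj}|\le M(M+1)^{k-n}$ for $k\ge n$, so the tail is dominated by a convergent power series beginning at order $\dt^{n+1}$. The remainder of the argument is bookkeeping of which cross-terms are of higher order, which, as noted above, hinges on $n\ge2$ (the case $n=1$ being excluded, as in Remark~\ref{rk:1}).
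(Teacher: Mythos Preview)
Your proof is correct and follows essentially the same route as the paper: expand $\alpha_0-1$ and $\alpha_j$ using the refined formula from the proof of Proposition~\ref{prop:alpha}, divide by $\alpha_1(\dt)=\dt+\calO(\dt^n)$, and identify $c_0=(-1)^{n-1}\det(A)$. You are more careful than the paper in justifying the $\calO(\dt^{n+1})$ tail for $\alpha_0$ and in tracking the cross-terms, both of which the paper treats as self-evident.
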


\begin{proof}
We compute 
\begin{equation*}
\frac{\alpha_0(\dt)-1}{\alpha_1(\dt)} = \frac{\dfrac{\dt^n}{n!}c_0 + \calO(\dt^{n+1})}{\dt + \calO(\dt^n)} = \frac{\dt^{n-1}}{n!}c_0 + \calO(\dt^n).
\end{equation*}
Besides $c_0=(-1)^{n-1}\det(A)$. For $j\geq2$,
\begin{equation*}
\frac{\alpha_j(\dt)}{\alpha_1(\dt)} = \frac{\dfrac{\dt^j}{j!} + \calO(\dt^{n+1})}{\dt + \calO(\dt^n)} = \frac{\dt^{j-1}}{j!} + \calO(\dt^{n-1}).
\end{equation*}
\end{proof}

\subsubsection{Correction of the right-hand side}

In Equation \eqref{eq:scheme_scalar}, Rule 2 in its generalized scalar form \eqref{eq:rule2} is untouched, but now the discretization of the right-hand side is modified.

\begin{Rule3bis}
In an NSFD scheme for the nonlinear system \eqref{eq:system} satisfying the classical Rule 2 \eqref{eq:rule2}, the usual nonlocal discretization of the right-hand side $AX_k+\calB(X_k,X_{k+1})$ should be supplemented with correction terms:
\begin{equation*}
AX_k+\calB(X_k,X_{k+1})+R_{1}(\dt,A)\left[AX_{k}+\calB(X_k,X_{k+1})\right]+R_{0}(\dt,A)\calB(X_k,X_{k+1}),
\end{equation*}
where $R_0$ and $R_1$ are given by \eqref{eq:R0andR1}.
\end{Rule3bis}

\section{Numerical tests}

We will now study the impact of the corrections in various situations. According to Remark \ref{rk:1} we can find contexts where one or the other correction vanishes.

\subsection{Impact of $R_0$}

\subsubsection{A quadratic nonlinear oscillator}

We first study the impact of $R_0$. According to Remark \ref{rk:1}, we therefore consider a system of two differential equations ($n=2$), so that $R_1\equiv0$.  In \cite{Hu06} the quadratic nonlinear differential equation is presented as a good benchmark for numerical schemes 
\begin{equation}
\label{eq:HarmOsci}
\begin{aligned}
& x'' + x + x^2 = 0,\\
& x(0) = x_0>0,\\
& x'(0) = 0.
\end{aligned}
\end{equation}
This equation occurs for example in human eardrum oscillation modeling. It has the significant advantage to have a known exact solution for $x_0<1/2$, namely
\begin{equation*}
x(t) = x_0 + a\sn^2(\omega t,m),
\end{equation*}
where
\begin{align*}
a & = \frac{-12x_0(1+x_0)}{\sqrt{3(1-2x_0)(3+2x_0)}+3(1+2x_0)}, \\
\omega & = \frac12 \sqrt{\frac12 + x_0 + \frac16\sqrt{3(1-2x_0)(3+2x_0)}}, \\
m & = \frac12 + \frac{3(2x_0^2+2x_0-1)}{3+(1+2x_0)\sqrt{3(1-2x_0)(3+2x_0)}}, 
\end{align*}
and $\sn$ is the Jacobi sine fonction. 
We write the second order equation \eqref{eq:HarmOsci} in the Hamiltonian form
\begin{equation}
\label{eq:SystHamil}
\begin{cases}
x' = y, \\
y' = - x - x^2.
\end{cases}
\end{equation}
We thus obtain a system of the form \eqref{eq:system}, with matrices
\begin{equation*}
A = \begin{pmatrix} 0 & 1 \\ -1 & 0 \end{pmatrix},\ 
B(X) = \begin{pmatrix} 0 \\ b(x) \end{pmatrix},
\end{equation*}
where $b(x)=-x^2$ and with initial data $x(0)=x_0$ and $y(0)=0$.

Besides the comparison with an exact solution, two properties can be used to evaluate the quality of numerical methods. First the exact solutions to \eqref{eq:HarmOsci} are periodic with period 
\begin{equation*}
P = 4 \int_0^{\pi/2} \frac{d\theta}{\sqrt{1-m^2\sin^2\theta}}.
\end{equation*}
Second, the differential equation \eqref{eq:HarmOsci} satisfies a conservation law:
\begin{equation}
\label{eq:conservation}
E(t) \equiv \frac12 (x'(t))^2 + \frac12 x(t)^2 + \frac13 x(t)^3 
= \frac12x_0^2 + \frac13 x_0^3.
\end{equation}

Several methods exist to obtain efficient schemes (called \textit{best schemes} by Mickens \cite{Mic94}) for a harmonic oscillator. We can cite the Gautschi type method \cite{HL99}, the exponential integration method \cite{HO10}, the gradient method \cite{Cie11, Cie13, CR10, CR11} and the NSFD method \cite{Mic94, MOR05, MR94}. Here, we are interested in the NSFD method. In \cite{SBF18} the computation of $\alpha_0$ and $\alpha_1$ is made explicit for two-dimensional matrices in term of the eigenvalues, namely $\pm i$ for the matrix involved in \eqref{eq:SystHamil}, leading to $\alpha_0(\dt)=\cos(\dt)$ and $\alpha_1(\dt)=\sin(\dt)$.
This is coherent with the predictions of Proposition \ref{prop:alpha}. Indeed, we have here $c_0=-1$ and $c_1=0$, and this yields $\alpha_0(\dt)=1-\dt^2/2 + \calO(\dt^3)$ and $\alpha_1(\dt)=\dt + \calO(\dt^3)$.

\subsubsection{Mickens' scheme for Hamiltonian systems}

In \cite{Mic94}, Mickens discretized Equation \eqref{eq:SystHamil} as
\begin{equation*}
\begin{cases}
\dfrac{x_{k+1}-\cos(\dt) x_k}{\sin(\dt)} = y_k, \\[5mm]
\dfrac{y_{k+1}-\cos(\dt) y_k}{\sin(\dt)} = - x_k - \left(x_{k+1}\right)^2.
\end{cases}
\end{equation*}

This scheme has the form \eqref{eq:scheme_scalar}, with $\calB(X_k,X_{k+1})=B(X_{k+1})$ and no correction term. Eliminating $y_k$ leads to a discretization of \eqref{eq:HarmOsci}:
\begin{equation*}
\frac{x_{k+1} - 2x_k + x_{k-1}}{\sin^2(\dt)} 
+ \frac{2\left[1-\cos(\dt)\right]x_k}{\sin^2(\dt)} + x_k^2 = 0.
\end{equation*}
The quantity $\dfrac{2\left[1-\cos(\dt)\right]}{\sin^2(\dt)}$ tends to 1 as $\dt\to0$, but we want to have exactly 1 to have an exact computation of the linear term of Equation \eqref{eq:HarmOsci}. We can express everything in terms of trigonometric functions of $\dt/2$ and write
\begin{equation}
\label{eq:HarmOsci_Mickens1}
\frac{x_{k+1} - 2x_k + x_{k-1}}{[2\sin(\dt/2)]^2} 
+ x_k + \cos^2(\dt/2) x_k^2 = 0.
\end{equation}

One of the consequences of the conservation law \eqref{eq:conservation} is that any periodic solution oscillates with a constant amplitude. The Mickens scheme \eqref{eq:HarmOsci_Mickens1} has this property. It suffices to note that it is invariant for any transformation which swaps $x_{k+1}$ and $x_{k-1}$.

However, in the case of a harmonic oscillator, it must also be shown that there is a constant first integral. Moreover it was established in \cite{Mic94} that the discretization of the nonlinear term used in \eqref{eq:HarmOsci_Mickens1} does not make it possible to obtain a constant (discrete) integral. It is then advisable \cite{Mic94} to discretize the non-linear term as
\begin{equation*}
b(x) \approx - x_k\left(\dfrac{x_{k+1}+x_{k-1}}2\right)
\end{equation*}
and the new scheme is
\begin{equation}
\label{eq:HarmOsci_Mickens2}
\frac{x_{k+1} - 2x_k + x_{k-1}}{[2\sin(\dt/2)]^2} 
+ x_k + \cos^2(\dt/2) x_k\dfrac{x_{k+1}+x_{k-1}}2 = 0,
\end{equation}
which conserves all the qualitative properties of the original equation.

This would necessitate to take 
\begin{equation*}
\dfrac{y_{k+1}-\cos(\dt) y_k}{\sin(\dt)} = - x_{k} - x_{k+1}\dfrac{x_{k+2}+x_k}2
\end{equation*}
as a discretization for the second equation of the Hamiltonian form. The system is then highly implicit and uses time $t+2\dt$ when approximating $B(X(s))$ on the time interval $[t,t+\dt]$ in the integral of Equation \eqref{eq:integralform}.

\subsubsection{Adding a correction term}

Let us now see how the NSFD scheme of Section \ref{sec:scalar} reads for Equation \eqref{eq:SystHamil}. Recall that for $n=2$, $R_1(\dt,A) \equiv 0$. We also have already computed $\alpha_0(\dt)$ and $\alpha_1(\dt)$. The scalar NSFD schemes reads
\begin{equation}
\label{eq:SystHamil_exact}
\frac{X_{k+1}-\cos(\dt) X_k}{\sin(\dt)} = AX_k+\calB(X_k,X_{k+1}) 
- \tan(\dt/2) A^{-1}\calB(X_k,X_{k+1}).
\end{equation}

Up to possible choices for $\calB(X_k,X_{k+1})$, and noticing that $-A^{-1}=A$, we find the same scheme as Mickens' but with a correction factor $\tan(\dt/2)A\calB$. We follow the same steps as in the previous paragraph to obtain a scheme for the initial second order equation.

The scalar NSFD scheme for equation \eqref{eq:SystHamil} reads 
\begin{equation*}
\begin{cases}
\dfrac{x_{k+1}-\cos(\dt) x_k}{\sin(\dt)} = y_k + \tan(\dt/2) b(x_k,x_{k+1}), \\[5mm]
\dfrac{y_{k+1}-\cos(\dt) y_k}{\sin(\dt)} = - x_k + b(x_k,x_{k+1}).
\end{cases}
\end{equation*}
Combining these two equations in the same way than for Mickens' scheme first yields
\begin{align*}
\frac{x_{k+1} - 2x_k + x_{k-1}}{\sin^2(\dt)} =\
& \frac{2\left[\cos(\dt)-1\right]x_k}{\sin^2(\dt)} +  b(x_{k-1},x_k) \\
& + \frac{\tan(\dt/2)}{\sin(\dt)} \left[ b(x_k,x_{k+1}) - \cos(\dt)b(x_{k-1},x_k) \right].
\end{align*}
With the same transformation that led to \eqref{eq:HarmOsci_Mickens1}, we find
\begin{equation}
\label{eq:HarmOsci_exact}
\frac{x_{k+1} - 2x_k + x_{k-1}}{[2\sin(\dt/2)]^2} 
+ x_k = \frac12 [b(x_{k-1},x_k) + b(x_k,x_{k+1})].
\end{equation}

The $\cos^2(\dt/2)$ coefficient of Equations \eqref{eq:HarmOsci_Mickens1} or \eqref{eq:HarmOsci_Mickens2} has disappeared. Besides to obtain $-x_k\dfrac{x_{k-1}+x_{k+1}}2$ in the right-hand side, one has simply to choose $b(x_k,x_{k+1})=-x_kx_{k+1}$, which is a nonlocal discretization of the nonlinearity (and therefore complies to Rule \ref{rule:3}) and is only semi-implicit. It only involves the present and the past but not the future, contrarily to what has been observed for Mickens' scheme \eqref{eq:HarmOsci_Mickens2}.

\subsubsection{Numerical results}

We now compare the previous numerical methods for $x_0=0.25$ which lies in the valid interval for initial data, namely $[0,1/2[$. The exact solution is given by $x_0 + a\sn^2(\omega t,m)$ with $a\simeq-0.55$, $\omega\simeq 0.53$, and $m\simeq0.33$. Its time evolution over the time interval $[0,35]$ is displayed in Figure \ref{fig:Kepler_exact}.
\begin{figure}[H]
\centerline{\includegraphics[width=.5\textwidth]{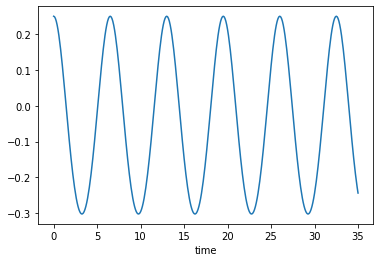}}
\caption{\label{fig:Kepler_exact}Time evolution of the exact solution of Equation \eqref{eq:HarmOsci} for $x_0=0.25$.}
\end{figure}
As already mentioned, this solution is periodic in time.
For the tests we use different values of $\dt$ and compute solutions \textit{via}
\begin{itemize}
\item the explicit Euler scheme,
\item the implicit Euler scheme,
\item Mickens' scheme \eqref{eq:HarmOsci_Mickens1},
\item Mickens' scheme \eqref{eq:HarmOsci_Mickens2},
\item the scalar scheme with correction $R_0$ \eqref{eq:HarmOsci_exact} with $b(x_k,x_{k+1})=-x_kx_{k+1}$.
\end{itemize}
The advantage of comparing the methods with a quadratic nonlinearity is that we are able to compute explicitly the iterates for all these methods without adding methods to solve nonlinear systems such as predictor--correctors or fixed points. The time evolution of the relative error between the exact solution and the computed solutions are shown in Figure \ref{fig:Kepler_error} for different values of the time-step.

Of course, the three NSFD schemes, \eqref{eq:HarmOsci_Mickens1}, \eqref{eq:HarmOsci_Mickens2}, and \eqref{eq:HarmOsci_exact}, outperform the Euler schemes, in particular they do not show a deterioration of the error as time evolves.

The correction $R_0$ does indeed improve Mickens' original schemes, gaining more than two errors of magnitude for small $\dt$. For large $\dt$ the gain is not so clear, but this is due to the approximation of the nonlinearity, which is the only source of approximation in \eqref{eq:HarmOsci_exact}, and which is dominant for $\dt=0.05$.

\begin{figure}[H]
\begin{tabular}{cc}
\includegraphics[width=.5\textwidth]{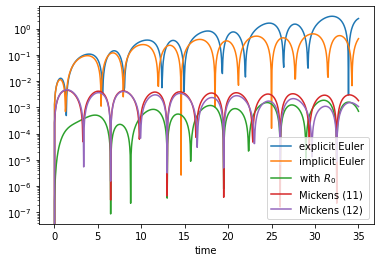}
& \includegraphics[width=.5\textwidth]{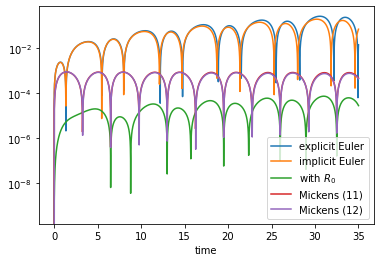} \\
$\dt = 0.05$ & $\dt = 0.01$ \\
\includegraphics[width=.5\textwidth]{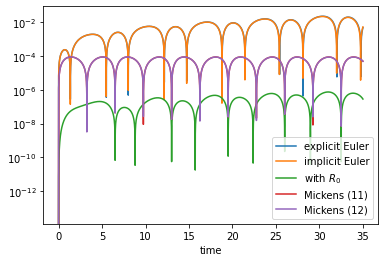}
& \includegraphics[width=.5\textwidth]{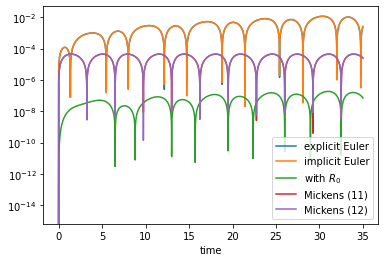} \\
$\dt = 0.001$ & $\dt = 0.0005$ \\
\end{tabular}
\caption{\label{fig:Kepler_error}Time evolution of the relative errors for $\dt=0.05$, $0.01$, $0.001$, and $0.0005$ for Equation \eqref{eq:HarmOsci} and $x_0=0.25$.}
\end{figure}

\subsection{Impact of $R_1$}
\label{sec:biomass}

\subsubsection{A forest biomass model}

To test the impact of $R_1$ only, we now consider a linear system with dimension greater than 2, namely $n=3$. Again, according to Remark \ref{rk:1}, $T_0\equiv0$ in this linear example. We use a simple example found in \cite{GW98} dealing with the evolution forest biomass. More precisely, we denote $x(t)$ the biomass decayed into humus, $y(t)$ the biomass of dead trees, and $z(t)$ the biomass of living trees. The corresponding evolution equations are
\begin{equation}
\label{eq:Biomass}
\begin{cases}
x'(t) = - x(t) + 3y(t), \\
y'(t) = - 3y(t) + 5z(t), \\
z'(t) = - 5z(t),
\end{cases}
\end{equation}
with an initial data where there are no dead trees and no humus at $t=0$, namely $x(0)=0$, $y(0)=0$, and $z(0)=z_0$. The corresponding matrix $A$ is
\begin{equation}
\label{eq:Biomass_matrice}
A = \begin{pmatrix} -1 & 3 & 0 \\ 0 & -3 & 5 \\ 0 & 0 & -5 \end{pmatrix},
\end{equation}
yielding the exact solution
\begin{equation*}
\begin{cases}
x(t) = \dfrac{15}8 \left(e^{-t}-2e^{-3t}+e^{-5t}\right) z_0, \\ 
y(t) = \dfrac52 \left(e^{-3t}-e^{-5t}\right) z_0, \\ 
z(t) = e^{-5t} z_0. 
\end{cases}
\end{equation*}

\subsubsection{Derivation of correction terms}

For matrix $A$ given by Equation \eqref{eq:Biomass_matrice}, we have $A^3=-15-23A-9A^2$, i.e. $c_0=-15$, $c_1=-23$, and $c_2=-9$. We therefore predict that $\alpha_0(\dt) = 1 - \frac52 \dt^3 + \calO(\dt^4)$, $\alpha_1(\dt) = \dt - \frac{23}6 \dt^3 + \calO(\dt^4)$, and $\alpha_2(\dt) = \frac12 \dt^2 - \frac32 \dt^3 + \calO(\dt^4)$.
Writing
\begin{equation*}
(\alpha_0(\dt)I+\alpha_1(\dt)A+\alpha_2(\dt)A^2)\begin{pmatrix} 0 \\ 0 \\ z_0\end{pmatrix} 
= \begin{pmatrix} x(\dt) \\ y(\dt) \\ z(\dt)\end{pmatrix}
\end{equation*}
yields
\begin{align*}
\alpha_0(\dt) & = \frac{15}8 e^{-\dt} - \frac54 e^{-3\dt} + \frac38 e^{-5\dt},\\
\alpha_1(\dt) & = e^{-\dt} - \frac32 e^{-3\dt} + \frac12 e^{-5\dt},\\
\alpha_2(\dt) & = \frac18 e^{-\dt} - \frac14 e^{-3\dt} + \frac18 e^{-5\dt}.
\end{align*}
These values do agree with the predicted expansions at order 3. Since there is no nonlinear part, 
\begin{align}
X_{k+1} 
& = \alpha_0(\dt) X_k + \alpha_1(\dt) A X_k + \alpha_1(\dt) T_1(\dt, A, X_k) \nonumber \\
\label{eq:Biomass_NSFD}
& = \alpha_0(\dt) X_k + \alpha_1(\dt) A X_k + \alpha_2(\dt) A^2 X_k.
\end{align}

\subsubsection{Numerical results}

For the numerical test case, we compare our method \eqref{eq:Biomass_NSFD}, which should be exact since no approximation has been done in its derivation, with the Euler explicit and implicit methods. We also compute  
\begin{align*}
X_{k+1} = \gamma_0(\dt) X_k + \gamma_1(\dt) A X_k + \gamma_2(\dt) A^2 X_k,
\end{align*}
where the $\gamma_j$ are the order 3 approximations of $\alpha_j$, namely $\gamma_0(\dt) = 1 - \frac52 \dt^3$, $\gamma_1(\dt) = \dt - \frac{23}6 \dt^3$, and $\gamma_2(\dt) = \frac12 \dt^2 - \frac32 \dt^3$.
Finally we derive a NSFD scheme on the above principles but for each equation separately, leading to
\begin{equation}
\label{eq:Biomass_Trad_NSFD}
\begin{cases}
\dfrac{x_{k+1} - x_k}{1-e^{-\Delta t}} = - x_k + 3 y_k, \\
\dfrac{y_{k+1} - y_k}{(1-e^{-3\Delta t})/3} = -3 y_k + 5 z_k, \\
\dfrac{z_{k+1} - z_k}{(1-e^{-5\Delta t})/5} = - 5 z_k.
\end{cases}
\end{equation}

The exact solution is computed over $[0,10]$, corresponding to ten years of time evolution, and the result is displayed on Figure \ref{fig:Biomass_exact}. 
\begin{figure}[H]
\centerline{\includegraphics[width=.5\textwidth]{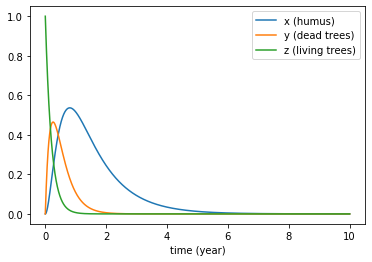}}
\caption{\label{fig:Biomass_exact}Time evolution of the exact solution of Equation \eqref{eq:Biomass} for $z_0=1$.}
\end{figure}
As expected from the equations, the biomass decayed into humus (corresponding to $x$) has the slowest time evolution and the errors accumulated on $x$ are greater than on the other variables. This is why we will show the errors on this variable. Dealing with a variable that is naturally decaying to zero the relative errors are computed as
\begin{equation*}
E_k = \frac{|x_k-x^e_k|}{x^e_k}, 
\end{equation*} 
where $x^e_k$ is the exact value and $x_k$ the computed value. Figure \ref{fig:Biomass_error} shows the relative errors for the different methods.

\begin{figure}[H]
\begin{center}
\begin{tabular}{ccc}
\includegraphics[width=.5\textwidth]{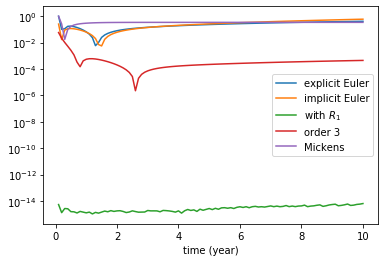}
& \includegraphics[width=.5\textwidth]{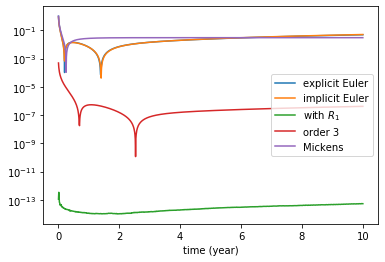} \\
$\dt = 0.1$ & $\dt = 0.01$ 
\end{tabular}
\includegraphics[width=.5\textwidth]{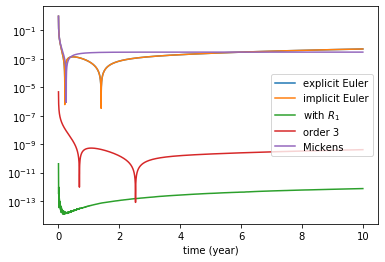} \\
$\dt = 0.001$ 
\end{center}
\caption{\label{fig:Biomass_error}Time evolution of the relative errors for $\dt=0.1$, $0.01$, $0.001$ for the forest biomass model.}
\end{figure}

As expected our method is exact. The order 3 method also behaves very well. It has the major advantage to be derived only with the knowledge of the coefficient of the characteristic polynomial of matrix $A$ which is much easier to compute than the $\alpha_j$. The performance of the traditional NSFD method \eqref{eq:Biomass_Trad_NSFD} is comparable to that of the explicit and implicit Euler methods.

\subsection{Impact of $R_0$ and $R_1$}

\subsubsection{A forest biomass model with constant force}

To test the impact of both $R_0$ and $R_1$, we consider the forest biomass model \eqref{eq:Biomass}, in which we introduce a constant forcing by planting trees. This corresponds to add a constant $z_f$ in the right-hand side of the last equation, modeling the time evolution of living trees. Hence the system reads
\begin{equation}
\label{eq:Biomass_trees}
\begin{cases}
x'(t) = - x(t) + 3y(t), \\
y'(t) = - 3y(t) + 5z(t), \\
z'(t) = - 5z(t) + z_f,
\end{cases}
\end{equation}
with initial conditions $x(0)=0$, $y(0)=0$, and $z(0)=z_0$.

The analytical solution is given by
\begin{equation*}
\begin{cases}
x(t) = \dfrac{15}8 \left(e^{-t} - 2 e^{-3t} + e^{-5t}\right) z_0
+ \dfrac18 \left(8 - 15 e^{-t} + 10 e^{-3t} - 3 e^{-5t} \right) z_f, \\ 
y(t) = \dfrac52 \left(e^{-3t} - e^{-5t}\right) z_0 
+ \dfrac16 \left(2 - 5 e^{-3t} + 3 e^{-5t} \right) z_f, \\ 
z(t) = e^{-5t} (z_0 - \frac{z_f}5) + \frac{z_f}5. 
\end{cases}
\end{equation*}
We display in Figure \ref{fig:Trees} the time evolution of this analytical solution for $z_0=1$ and $z_f=0.5$. We observe in particular the theoretical long time limits,  $z_f$, $z_f/3$, and $z_f/5$ for $x$, $y$, and $z$ respectively. 

\begin{figure}[H]
\centerline{\includegraphics[width=.5\textwidth]{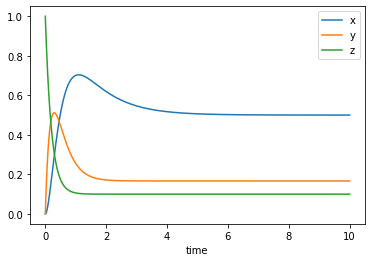}}
\caption{\label{fig:Trees}Time evolution of the exact solution of System \eqref{eq:Biomass_trees} for $z_0=1$ and $z_f=0.5$.}
\end{figure}
The NSFD scheme \eqref{eq:scheme_scalar} reads
\begin{equation}
\label{eq:Trees_NSFD}
\begin{aligned}
X_{k+1} = &\ \alpha_0(\dt) X_k + \alpha_1(\dt) \left[A X_k + \calB\right] 
+ \alpha_2(\dt)A\left[A X_k + \calB\right] \\
&+ (\alpha_0(\dt) - 1)A^{-1}\calB,
\end{aligned}
\end{equation}
where the matrix $A$ and coefficients $\alpha_j$ are the same as in \eqref{eq:Biomass_NSFD}, but now we have a constant nonlinearity $\calB$
\begin{equation}
\calB = \begin{pmatrix} 0 \\ 0 \\ z_f\end{pmatrix}.
\end{equation}
This test case enables to study the impact of the correction term for the nonlinear part without any approximation on the nonlinearity itself. \\
We anew compare this method, with the explicit and implicit Euler schemes, the scheme where the coefficients $\alpha_j$ are replaced by the corresponding $\gamma_j$,
%\begin{equation}
%\label{eq:Trees_Order_3}
%\begin{aligned}
%X_{k+1} &= \gamma_0(\dt) X_k + \gamma_1(\dt) \left[A X_k + \calB\right] 
%+ \gamma_2(\dt)A\left[A X_k + \calB\right] \\
%&+ (\gamma_0(\dt) - 1)A^{-1}\calB,
%\end{aligned}
%\end{equation}
and the traditional NSFD scheme where we replace the last equation in System \eqref{eq:Biomass_Trad_NSFD} by
\begin{equation*}
\dfrac{z_{k+1} - z_k}{(1-e^{-5\Delta t})/5} = - 5 z_k + z_f.
\end{equation*}

\begin{figure}[H]
\begin{center}
\begin{tabular}{ccc}
\includegraphics[width=.5\textwidth]{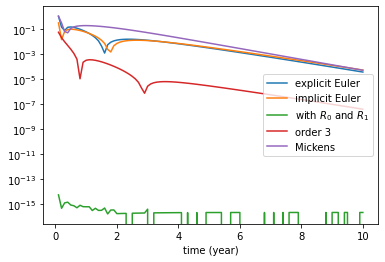}
& \includegraphics[width=.5\textwidth]{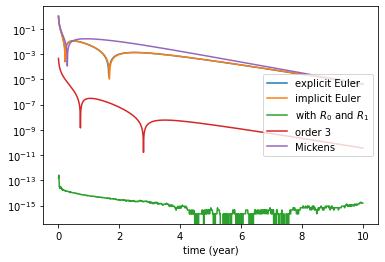} \\
$\dt = 0.1$ & $\dt = 0.01$ 
\end{tabular}
\includegraphics[width=.5\textwidth]{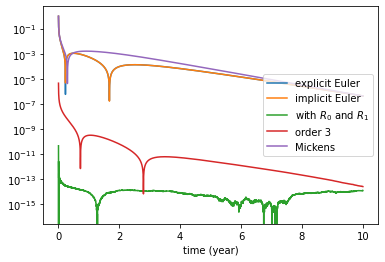} \\
$\dt = 0.001$ 
\end{center}
\caption{\label{fig:Trees_error}Time evolution of the relative errors for $\dt=0.1$, $0.01$, $0.001$ for the forest biomass model with constant forcing.}
\end{figure}

The time evolution of the relative error between the analytical exact solution and the approximated solutions is shown in Figure \ref{fig:Trees_error} for different values of the time step. Our method is exact and behaves very well for any time step. Replacing the $\alpha_j$'s by their third order approximation also yields good results, while the traditional Mickens' NSFD scheme is comparable to the explicit and implicit Euler methods.

\subsubsection{A forest biomass model with a seasonal plantation}

To continue to explore method errors, we now modify the biomass model \eqref{eq:Biomass} to have both corrections $R_0$ and $R_1$ and this time a nonlinearity that models seasonal plantations, and which amounts to performing a sinusoidal forcing
\begin{equation}
\label{eq:Biomass_cosinus}
\begin{cases}
x'(t) = - x(t) + 3y(t), \\
y'(t) = - 3y(t) + 5z(t), \\
z'(t) = - 5z(t) + z_f\left[1 + \cos(\omega t)\right],
\end{cases}
\end{equation}
with initial conditions $x(0)=0$, $y(0)=0$, and $z(0)=z_0$. Such a time dependent forcing will have to be approximated in the numerical schemes.

The exact analytical solution of the new system is
\begin{equation*}
\begin{cases}
x(t) = & \dfrac{15}8 \left(e^{-t} - 2 e^{-3t} + e^{-5t}\right) z_0
+ \dfrac18 \left(8 - 15 e^{-t} + 10 e^{-3t} - 3 e^{-5t} \right) z_f\\
& + 15 \dfrac{3 (5-3\omega^2) \cos(\omega t) + \omega (23-\omega^2) \sin(\omega t)}
               {(1+\omega^2)(9+\omega^2)(25+\omega^2)} z_f \\
& + \dfrac{15}8 \left(\dfrac{- e^{-t}}{1+\omega^2} + \dfrac{6 e^{-3t}}{9+\omega^2} + \dfrac{- 5 e^{-5t}}{25+\omega^2} \right)z_f, \\ 
y(t) = & \dfrac52 \left(e^{-3t} - e^{-5t}\right) z_0 
+ \dfrac16 \left(2 - 5 e^{-3t} + 3 e^{-5t} \right) z_f \\
& + 5 \dfrac{(15-\omega^2) \cos(\omega t) + 8 \omega \sin(\omega t)}{(9+\omega^2)(25+\omega^2)} z_f
+  \dfrac52 \left(\dfrac{- 3 e^{-3t}}{9+\omega^2} +\dfrac{5 e^{-5t}}{25+\omega^2} \right)z_f,\\
z(t) = & e^{-5t} z_0 + \dfrac15 \left(1 - e^{-5t}\right) z_f 
+ \dfrac{5 \cos(\omega t) + \omega \sin(\omega t) } {25+\omega^2} z_f
+ \dfrac{- 5 e^{-5t}} {25+\omega^2} z_f.
\end{cases}
\end{equation*}

This solution is displayed in Figure \ref{fig:Biomass_cosinus}. We choose $z_0=1$ and $z_f=0.5$ to have the same mean limits as in the previous simulations. We also choose $\omega=2\pi$ to have a one year period for the forcing.

\begin{figure}[H]
\centerline{\includegraphics[width=.5\textwidth]{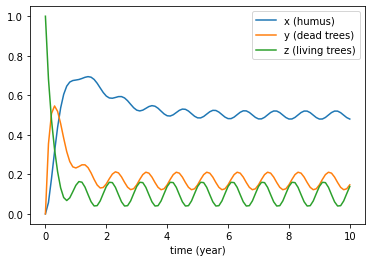}}
\caption{\label{fig:Biomass_cosinus}Time evolution of the exact solution of System \eqref{eq:Biomass_cosinus} for $z_0=1$, $z_f=0.5$ and $\omega=2\pi$.}
\end{figure}

The numerical schemes we compare are exactly the same as before, except for the treatment of $B$:
\begin{equation*}
B(t) = \begin{pmatrix} 0 \\ 0 \\ z_f \left[1 + \cos(\omega t)\right]\end{pmatrix},
\end{equation*}
which is now time-dependent and for which we have to choose an approximation. For the computation of $X_{k+1}$ from $X_k$, five approximations have been used and compared if relevant, namely
\begin{equation*}
\calB_{\rm left} = B(t_k),\
\calB_{\rm right} = B(t_{k+1}),\
\calB_{\rm middle} = B((t_k+t_{k+1})/2),
\end{equation*}
\begin{equation*}
\calB_{\rm half} = (B(t_k) + B(t_{k+1}))/2,\
\calB_{\rm mean} = \int_{t_k}^{t_{k+1}} B(t)dt.
\end{equation*}
The explicit and implicit Euler methods clearly use $\calB_{\rm left}$ and $\calB_{\rm right}$ respectively, but the question is open for the other numerical methods. In a first row of numerical tests we compare the errors when  $\calB$ is approximated by $\calB_{\rm half}$. We choose this because it is the form which (besides the explicit one) is the easiest to extend when nonlinearities involving $X_k$ are concerned. Figure \ref{fig:Biomass_cosinus_error} shows the errors for the five studied schemes. Again our method and its third order approximation outperform the three other schemes.

\begin{figure}[H]
\begin{center}
\begin{tabular}{ccc}
\includegraphics[width=.5\textwidth]{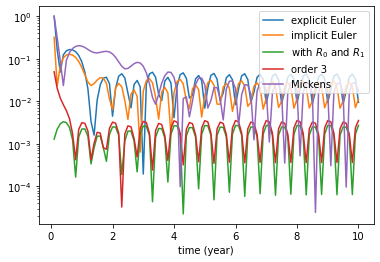}
& \includegraphics[width=.5\textwidth]{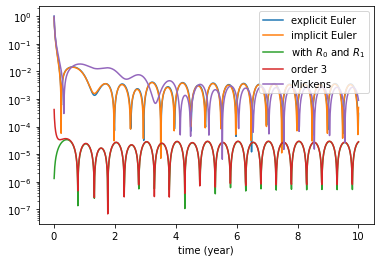} \\
$\dt = 0.1$ & $\dt = 0.01$ 
\end{tabular}
\includegraphics[width=.5\textwidth]{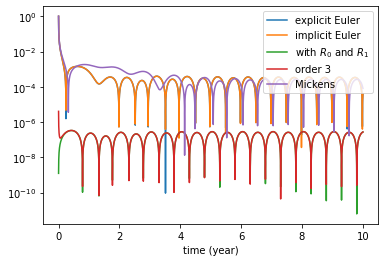} \\
$\dt = 0.001$ 
\end{center}
\caption{\label{fig:Biomass_cosinus_error}Time evolution of the relative errors for $\dt=0.1$, $0.01$, $0.001$ for the forest biomass model with time-dependent forcing.}
\end{figure}

Now we want to discuss the comparison of the two best methods. Since there is an approximation in the estimation of the time-dependent term, and that this approximation is coarser than the approximation in the third order method, both schemes yield very similar results, and the errors are $O(\dt^2)$. We can discuss a little further by comparing the use of $\calB_{\rm left}$, $\calB_{\rm middle}$, $\calB_{\rm half}$, and $\calB_{\rm mean}$ for $\dt=0.001$. The numerical results are displayed in Figure \ref{fig:Biomass_cosinus_errors}. The computation with $\calB_{\rm half}$ yields the worst results among the other methods but the difference is not significant enough to be worth when dealing with more complex nonlinearities or time dependent forcings.

\begin{figure}[H]
\begin{center}
\begin{tabular}{ccc}
\includegraphics[width=.5\textwidth]{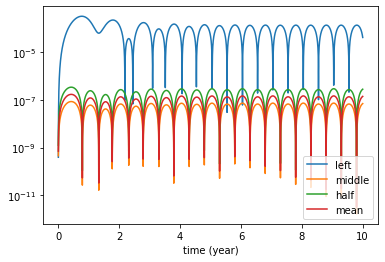}
& \includegraphics[width=.5\textwidth]{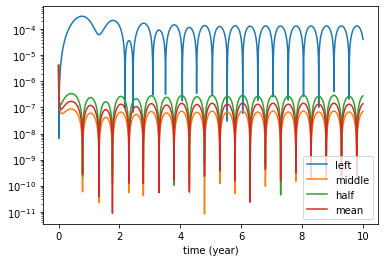} \\
with $\alpha_j$ coefficients & with $\gamma_j$ coeficients
\end{tabular}
\end{center}
\caption{\label{fig:Biomass_cosinus_errors}Comparison of the approximations of the time-dependent forcing for our methods and its third order approximation.}
\end{figure}

\section{Discussion}

\subsection{Extended rules for systems}

We have defined two new rules for NSFD schemes for systems of ODEs. These rules stem from a careful derivation when splitting the equation into a linear and a nonlinear part. The only approximations are made on the nonlinear part. 

In a first step a matrix formulation is given, leading to a generalization of the second rule (Rule 2'), which addresses the treatment of the first derivative. The usual scalar functions $\phi$ and $\psi$, involved in the denominator and the numerator respectively, are then replaced by matrix valued functions. The system is treated as a whole, contrarily to what can usually been done where each equation is taken into account more or less separately. An example of this separate treatment is illustrated by \eqref{eq:Biomass_Trad_NSFD}.

The matrix formulation is an exponential integrator, and deriving a scalar version of this scheme allows to avoid the possible difficulties in computing the matrix exponentials. This leads to usual scalar coefficients in the discretization of the first order derivative, but they are the same for all the equations, and to correction terms in the right-hand side, which are described by Rule 3'. 

In the examples we have separated the effect of the two correction terms on purpose. But of course they are designed also to work together. If the system is linear, or the nonlinearity is a constant forcing term, no approximation is made at any stage of the derivation and the obtained scheme is exact. In the case of a constant forcing term and for at least three coupled equations the two correction terms are nonzero.
 
\subsection{Deriving the scalar coefficients}

The derivation of the scalar coefficient is tedious. The examples shown here are quite simple since they deal with very few equations. In our second example, we computed $\exp(\dt A)$ formally and wrote equation \eqref{eq:Expmatrice}, which led to solve a three-dimensional linear system in the $\alpha_j$. Computing $\exp(\dt A)$ formally needs to know the eigenvalues and eigenvectors. 

Replacing this formal derivation by a numerical determination of the $\alpha_j$, computing $\exp(\dt A)$ numerically and solving the resulting systems numerically can destroy the quality of the method. We have experienced ourselves that even not being careful with the computation of the (scalar) exponentials in the construction of the $\alpha_j$ in Section \ref{sec:biomass} leads to destroy the fine equilibrium that leads to the expansions in Proposition \ref{prop:alpha} and to a not better scheme than the explicit Euler scheme!

If the formal computation is not possible, we strongly recommend to replace the $\alpha_j$ by their $n$-th order approximation as done in Section \ref{sec:biomass} with the "order 3" scheme. This approximation has the advantage to only use the knowledge of the coefficients of the characteristic polynomial. This polynomial is easier to compute than the $\alpha_j$. It is indeed the first step in the computation of the $\alpha_j$. 
Taking $\gamma_j$ simply consists in using the truncated series $S_{n-1}(\dt A) = \sum_{j=0}^{n-1} \frac{\dt^j}{j!} A^j$ instead of the matrix exponential. For a linear system with $n=5$, this is equivalent to use the classical order 4 Runge--Kutta method. For other system dimensions, we also have a Runge--Kutta-like method, but with an order that is adapted to $n$.
 
\subsection{Singular linear part}

In the previous discussion, we have used $A^{-1}$ and implicitly have supposed that $A$ was non-singular. If $A$ is singular, the nonlinearity $B$ can be written as $B=AC+K$ where $K$ belongs to  the kernel of $A$. Then 
\begin{equation*}
\int_0^\dt e^{(\dt-s)A} ds B = (e^{\dt A}-I) C =   (e^{\dt A}-I) A^+ B, 
\end{equation*}   
where $A^+$ is the generalized inverse of $A$. This allows to generalize our approach in the singular case.

\section{Conclusion}

Having considered the NSFD method as a special class of exponential integrators, we have been able to revisit Mickens's rules to apply to systems of ODEs. When these systems are linear, the method is exact. In the Hamiltonian nonlinear case, it consists in adding to Mickens' schemes a correction term, that has been shown to improve the accuracy.


\begin{thebibliography}{PP}

\bibitem[Cie11]{Cie11} J.L. Cieśliński, 
\textit{On the exact discretization of the classical harmonic oscillator equation.} 
Journal of Difference Equations and Applications, \textbf{17}(11), 1673--1694 (2011).

\bibitem[Cie13]{Cie13} J.L. Cieśliński, 
\textit{Locally exact modifications of numerical schemes.} 
Computers \& Mathematics with Applications, \textbf{65}(12), 1920--1938 (2013).

\bibitem[CR10]{CR10} J.L. Cieśliński and B. Ratkiewicz, 
\textit{Improving the accuracy of the discrete gradient method in the one-dimensional case.} 
Physical Review E, \textbf{81}(1), 016704:1--6 (2010).

\bibitem[CR11]{CR11} J.L. Cieśliński and B. Ratkiewicz, 
\textit{Energy-preserving numerical schemes of high accuracy for one-dimensional Hamiltonian systems.} 
Journal of Physics A: Mathematical and Theoretical,  \textbf{44}(15), 155206:1--14 (2011).

\bibitem[GW98]{GW98}
Grant~B. Gustafson and Calvin~H. Wilcox, 
\textit{Analytical and computational methods of advanced engineering mathematics}, 
Springer (1998).

\bibitem[HL99]{HL99} M. Hochbruck and C. Lubich, 
\textit{A Gautschi-type method for oscillatory second-order differential equations.} Numerische Mathematik, \textbf{83} 403--426 (1999).

\bibitem[HO10]{HO10} M. Hochbruck and A. Ostermann, 
\textit{Exponential integrators.} 
Acta Numerica, \textbf{19}, 209--286 (2010).

\bibitem[Hu06]{Hu06}H.~Hu, 
\textit{Exact solution of a quadratic nonlinear oscillator.}
Journal of Sound and Vibration \textbf{295}, 450--457 (2006).

\bibitem[Mic94]{Mic94} R. E. Mickens,
\textit{Nonstandard finite difference models of differential equations.} 
World scientific (1994).

\bibitem[Mic00]{Mic00} R. E. Mickens,
\textit{Nonstandard finite difference schemes.} 
In  R. E. Mickens (ed), Applications of Nonstandard Finite Difference Schemes, World scientific, pp. 1--54 (2000).

\bibitem[MOR05]{MOR05} R.E. Mickens, K. Oyedeji, and S. Rucker,
\textit{Exact finite difference scheme for second-order, linear ODEs having constant coefficients.} 
Journal of Sound and Vibration, \textbf{287}(4--5), 1052--1056 (2005).

\bibitem[MR94]{MR94} R. E. Mickens and I. Ramadhani,
\textit{Finite-difference schemes having correct linear stability properties for all step-sizes III.} 
Computers \& Mathematics with Applications, \textbf{27}(4), 77--84 (1994).

\bibitem[MV03]{MV03} C. Moler and C. {Van Loan},
\textit{Nineteen Dubious Ways to Compute the Exponential of a Matrix, Twenty--Five Years Later.} 
SIAM Review, \textbf{45}(1), 3--49 (2003).

\bibitem[Pat16]{Pat16} K. C. Patidar, 
\textit{Nonstandard finite difference methods: recent trends and further developments.} Journal of Difference Equations and Applications, \textbf{22}(6), 817--849 (2016).

\bibitem[QT18]{QT18} D. {Quang A} and H. Manh Tuan. 
\textit{Exact finite difference schemes for three-dimensional linear systems with constant coefficient.} 
Vietnam Journal of Mathematics, \textbf{46}, 471--492 (2018).

\bibitem[SBF18]{SBF18} M. E. Songolo and B. Bid\'egaray-Fesquet, 
\textit{Nonstandard finite-difference schemes for the two-level Bloch model.} 
International Journal of Modeling, Simulation and Scientific Computing, \textbf{9}(4), 1850033:1-23 (2018).

\bibitem[SBF21]{SBF21} M. E. Songolo and B. Bid\'egaray-Fesquet, 
\textit{Strang splitting schemes for $N$-level Bloch models.} 
To appear in International Journal of Modeling, Simulation, and Scientific Computing.
\end{thebibliography}
\end{document}